\definecolor{darkblue}{RGB}{70,130,180}
\def\R{\mathbb R}
\def\Z{\mathbb Z}
\def\N{\mathbb N}
\def\sgn{\mathrm{sgn\ }}
\def\<{\langle}
\def\>{\rangle}
\def\MOP{multiple orthogonal polynomials}
  \newtheorem{te}{Theorem}[section]
\begin{document}

\title{Interlacing properties of zeros of multiple orthogonal polynomials}
\author{Maciej Haneczok and Walter Van Assche}

\date{\today }
\maketitle
\begin{abstract}
It is well known that the zeros of orthogonal polynomials interlace. In this paper we study the case of multiple orthogonal polynomials. We recall known results and some recursion relations for \MOP. Our main result gives a sufficient condition, based on the coefficients in the recurrence relations,  for the interlacing of the zeros of neighboring \MOP. We give several examples illustrating our result. 
\end{abstract}

\section{Preliminaries}

An important and useful relation between zeros of consecutive polynomials is the following separation of zeros
\begin{eqnarray} 
y_1 < x_1 < y_2 <  \ldots  < x_n < y_{n+1}
\end{eqnarray}
where $x_1, \ldots , x_n$ and $y_1,\ldots , y_{n+1}$ are the zeros of the polynomials $p_n$ and $p_{n+1}$ respectively. The most important consequence of this property is that it simplifies estimates of the ratio of polynomials, needed for proving the asymptotic behavior. Among other things, the interlacing property of zeros guarantees convergence of the approximation of zeros by zeros of certain special functions using a fixed point method (see \cite{Segura}) and it is equivalent to the positivity of weights in quadrature formulas (e.g., \cite{Monegato}).

The zeros of orthogonal polynomials interlace as a consequence of the Christoffel-Darboux formula or the recurrence relation (e.g., \cite{Szego}). It holds even more general for a Sturm sequence of polynomials. We are interested in finding the interlacing property for multiple orthogonal polynomials from their recurrence relations.

In the first subsections \ref{subsection_mop} --  \ref{subsection_nnrr} we briefly introduce multiple orthogonal polynomials and their recursion. In Section \ref{section_interlacing_properties} we summarize known results, which are for the case of an AT system, a Nikishin system and a mixed type system with two Nikishin systems. We present our main result in Subsection \ref{subsection_iterlacing_properties_from_rr}, which states that the positivity of the coefficients $a_{\vec n,j}$ in the nearest neighbor recurrence relations ensures the interlacing of zeros of neighboring \MOP. It is worth noting that this condition does not depend directly on whether the measures $(\mu_1, \ldots , \mu _r)$, with respect to which the polynomials satisfy orthogonality conditions, form a certain special system. Section \ref{section_examples} contains several examples of our result. It gives examples of multiple Hermite polynomials (see \ref{subsection_hermite}), multiple Charlier polynomials (see \ref{subsection_charlier}), both kinds of multiple Meixner polynomials (see \ref{subsection_meixner_fk} and \ref{subsection_meixner_sk}), multiple Krawtchouk polynomials (see \ref{subsection_krawtchouk}), and both kinds of multiple Laguerre polynomials (see \ref{subsection_laguerre_sk} and \ref{subsection_laguerre_fk}).

In the following we will use the standard notation, i.e., by $\vec n$ we denote the multi-index $(n_1,\ldots , n_r)$, where $n_j\in \N$. By $|\vec n|$ we denote the length of the multi-index, i.e., $|\vec n|=\sum_{i=1}^r n_i$.

\subsection{Multiple orthogonal polynomials}\label{subsection_mop}

As is well known, there are two types of multiple orthogonal polynomials, type I and type II (e.g., \cite{Ismail,VanAssche}). In this paper we will mainly investigate \MOP\ of type II, namely the unique monic polynomial of degree $|\vec n|$ satisfying the orthogonality conditions 
\begin{eqnarray} \label{orthogonality_typeII}
\int _{\R} x^k P_{\vec n} (x)\, d\mu _j (x)  = 0,\ \quad \text{for}\ 0 \leq k\leq n_j-1,\ \ \text{and}\ \ j=1,\ldots r,
\end{eqnarray}
with respect to $r$ different positive Borel measures $(\mu _1,\ldots , \mu _r)$ that are absolutely continuous with respect to a measure $d\mu$, i.e., for all $j=1,\ldots r$ we have $d\mu_j (x) = w_j(x)\,d\mu(x)$. We say that the multi-index $\vec{n}$ is normal when this monic polynomial $P_{\vec{n}}$ is unique.

Type I multiple orthogonal polynomials are polynomials $(A_{\vec n,1}, \ldots, A_{\vec n,r})$, where $A_{\vec n,j}$ has degree $\leq n_j-1$ such that 
\begin{eqnarray}
\sum_{j=1}^r \int _{\R} x^k A_{\vec n,j} (x)\, d\mu _j (x)  = 0,\ \quad \text{for}\ 0 \leq k\leq |\vec n|-2,\ \ \text{and}\ \ j=1,\ldots r,
\end{eqnarray}
and
\begin{eqnarray}
\sum_{j=1} ^r \int _{\R} x^{|\vec n|-1} A_{\vec n,j} (x)\, d\mu _j (x)  = 1,\ \quad \text{for}\ j=1,\ldots r.
\end{eqnarray}
These polynomials are uniquely defined if and only if the multi-index $\vec n$ is normal. We will use the notation 
$Q_{\vec n} (x) = \sum_{j=1}^r A_{\vec n,j}(x) w_j(x)$.

Multiple orthogonal polynomials are intimately related to Hermite-Pad\'e approximants and often they are also called Hermite-Pad\'e polynomials.

\subsection{Recurrence relations for multiple orthogonal polynomials} \label{subsection_rrMOP}

Recall that orthogonal polynomials satisfy a three term recurrence relation 
\begin{eqnarray*}
x p_n(x) = a_{n+1} p_{n+1}(x) + b_n p_n(x) + a_n p_{n-1}(x) ,\ \qquad  n\geq 0,
\end{eqnarray*}
with initial values $p_0=1$ and $p_{-1}=0$ (e.g., \cite{Szego}). 
Multiple orthogonal polynomials also satisfy finite order recurrence relations. There are two types of recurrence relations for multiple orthogonal polynomials. Often only the multiple orthogonal polynomials are considered with the following sequences of indices \\

\noindent \begin{tabular}{p{0.7in}p{2.3in}p{2in}}
 diagonal: & $\vec n = (n,\ldots ,n)$, & $|\vec n|= r n$, \\
 step-line: & $\vec n = (m+1, \ldots , m+1, m, \ldots ,m)$, & $|\vec n| =rm + s,  0 \leq s \leq r-1$.
\vskip.6cm			
\end{tabular}

Any multi-index $\vec n$ on the step-line may be identified by the value of the length of $\vec n$. Observe that every natural number $n$ may be written as $n=m r+s$ with $0 \leq s\leq r-1$ and then the corresponding multi-index $\vec n$ is the one above where the value $m+1$ is repeated $s$ times. 
If all multi-indices are normal, then as a consequence of the orthogonality conditions we have the following finite order recurrence relation 
for the polynomials on the step-line:
\begin{eqnarray*}
x P_n(x) &=& P_{n+1}(x) + a_{n,n} P_n(x) + a_{n,n-1} P_{n-1}(x) + a_{n,n-2}P_{n-2}(x) \\
& & + \ldots + a_{n,n-r} P_{n-r} (x), \qquad n\geq 0,
\end{eqnarray*}
with initial conditions $P_{-r}=\ldots =P_{-1}=0$ and $P_0=1$.

There are several ways to increase the degree of multiple orthogonal polynomials, since we are working with multi-indices. Another recurrence relation only uses the nearest neigbors of $P_{\vec{n}}$. This will be described in the next subsection.

\subsection{Nearest neighbor recurrence relations} \label{subsection_nnrr}

Here we briefly introduce a second recursion for \MOP. For more details we refer to  \cite{VanAssche}. 
The nearest neighbor recurrence relation \eqref{nnrr}  connects type II multiple orthogonal polynomial $P_{\vec n}$ with the polynomial of degree one higher $P_{\vec n+\vec e_k}$ and all the neighbors of degree one lower $P_{\vec n- \vec e_j}$ for $j=1,\ldots , r$. To derive it, we proceed as follows.
Since both $P_{\vec n}$ and $P_{\vec n+\vec e_k}$ are monic polynomials, the difference  
\begin{eqnarray*}
x P_{\vec n}(x) - P_{\vec n +\vec e_k} (x)
\end{eqnarray*}
is a polynomial of degree $\leq |\vec n|$. Choosing $b_{\vec n,k}$ appropriately we can also cancel the term containing $x^{|\vec n|}$. Hence
\begin{eqnarray*}
x P_{\vec n}(x) - P_{\vec n +\vec e_k} (x) - b_{\vec n,k} P_{\vec n} (x)
\end{eqnarray*}
is a polynomial of degree $\leq |\vec n|-1$. It is easy to see that this polynomial is orthogonal to all polynomials of degree $n_j-2$ with respect to 
$\mu_j$, for $j=1,\ldots r$.
Finally, the polynomials $P_{\vec n - \vec e_1},\ldots , P_{\vec n - \vec e_r}$ form a basis for the linear space of all polynomials of degree $\leq |\vec n| -1$ that satisfy the orthogonality conditions
\begin{eqnarray*}
\int _{\R} x^k P_{\vec n} (x)\, d\mu _j (x)  = 0,\ \quad \text{for}\ k\leq n_j-2,\ \ \text{and}\ \ j=1,\ldots r.
\end{eqnarray*}
Hence we can write the polynomial $x P_{\vec n}(x) - P_{\vec n +\vec e_k} (x) - b_{\vec n,k} P_{\vec n} (x)$ as a linear combination of the polynomials 
$P_{\vec n-\vec e_1}, \ldots, P_{\vec n-\vec e_r}$, with some coefficients $a_{\vec n,j}$. This forms the so-called nearest neighbor recurrence relation 
\begin{eqnarray}\label{nnrr}
x P_{\vec n}(x)  =  P_{\vec n +\vec e_k} (x) + b_{\vec n,k} P_{\vec n} (x) + \sum _{j=1} ^r a_{\vec n,j} P_{\vec n- \vec e_j} .
\end{eqnarray}
There are $r$ such relations for each $k=1,\ldots ,r$. Note that the coefficients $a_{\vec{n},j}$ do not depend on $k$. Indeed, 
the coefficients $a_{\vec n,j}$ can be computed directly from this recursion using the orthogonality conditions (\ref{orthogonality_typeII}), giving
\begin{eqnarray}\label{anj}
a_{\vec n,j} = \frac{ \int _{\R} x^{n_j} P_{\vec n} (x)\, d\mu _j(x) } { \int _{\R} x^{n_j-1} P_{\vec n-\vec e_j} (x)\, d\mu _j(x) },
\end{eqnarray}
while the coefficients $b_{\vec n,k}$ can be computed by multiplying both sides of the equation by the multiple orthogonal polynomials of type I and applying the bi-orthogonality (see \cite{Ismail} Chapter 23, Theorem 23.1.6), giving 
\begin{eqnarray}\label{bnj}
b_{\vec n,k} = \int _{\R} x P_{\vec n} (x)  Q _{\vec n +\vec e_k} (x)\, d\mu(x). 
\end{eqnarray}

Similar recursion relations hold for type I \MOP, see \cite{Ismail,VanAssche}.

\section{Interlacing properties} \label{section_interlacing_properties}

D. Kershaw proved in \cite{Kershaw} the interlacing property for the zeros of polynomials orthogonal with respect to a Markov system. If the sequence $\phi _1(x), \phi _2(x), \ldots $ forms an integrable Markov system on $(a,b)$ (in particular a Chebyshev system) then there exists a unique polynomial $q_n$ of degree equal to $n$ with real, simple zeros in $[a,b]$, such that 
\begin{eqnarray*}
\int _a ^b q_n(x) \phi _i (x)\, dx = 0,\ \quad \text{for}\  i=1,2.\ldots , n,
\end{eqnarray*}
and if $q_{n+1}$ is a polynomial of degree exactly $n+1$ with real, simple zeros satisfying the same orthogonality condition, 
then the zeros of $q_n$ and $q_{n+1}$ interlace. 

Later, in \cite{FidalgoPrieto}, this was generalized in the sense that the Lebesgue measure was replaced by an arbitrary Borel measure, which was 
needed to prove interlacing in the case of \MOP\ for a Nikishin system. The authors showed that if $N(\sigma_1,\ldots \sigma_r)$ is an arbitrary Nikishin system of $r$ measures, then the zeros of the type II \MOP\ $P_{\vec n}$ and $P_{\vec n+\vec e_k}$ interlace and the zeros of $P_{\vec n}$ and $Q_{\vec n,j}$ interlace, where
\begin{eqnarray*}
Q_{\vec n,j} (x) = \int \frac{P_{\vec n}(x)-P_{\vec n} (t) }{x-t}\, ds_j(t),
\end{eqnarray*}
and $s_j$ (for $1\leq j\leq r$) are measures defined by induction using 
\begin{eqnarray} \label{sigma_induction}
\< \sigma_i,\sigma_j\> (x) = \int \frac{d \sigma _j (t)}{x-t}\, d\sigma _i(x)\ \quad \< \sigma _1, \sigma _2, \ldots , \sigma _n\> = \< \sigma _1, \< \sigma _2,\ldots , \sigma _n\> \> ,
\end{eqnarray}
i.e., $s_1=\< \sigma_1 \> = \sigma _1$, $s_2 =\< \sigma _1, \sigma _2 \>, \ldots , s_r =\< \sigma _1, \ldots , \sigma_r\>$.


In \cite{Aptekarev} the authors generalized the above result, proving that the zeros of $P_{\vec n, k}$ and $P_{\vec n+\vec e_\ell, k}$ interlace, 
where the sequence of polynomials $P_{\vec n,k}$ is defined as the sequence of polynomials of degree $n_k+\ldots +n_r$ with $P_{\vec n, 0}=P_{\vec n, r+1}\equiv 1$, that is orthogonal to the measures which are given by
\begin{eqnarray*}
d\mu_k (x) = \frac{|H_{\vec n, k}(x)|}{ |P_{\vec n, k-1}(x)P_{\vec n, k+1}(x)| } \, d\sigma_k (x),
\end{eqnarray*}
where $H_{\vec n,k}=\frac{P_{\vec n, k-1}\Psi_{\vec n, k-1}}{P_{\vec n, k}}$ and $\Psi _{\vec n,k}$ are defined recursively as follows, for each $\vec n$ we set $\Psi _{\vec n,0} (x) = P_{\vec n} (x)$ and
\begin{eqnarray*}
\Psi _{\vec n, k} (x) = \int \frac{\Psi _{\vec n, k-1} (t) }{x-t}\, d\sigma _k (t), \qquad \text{for}\ k =1, \ldots, r.
\end{eqnarray*}
In this notation the case $k=1$ corresponds to result in \cite{FidalgoPrieto}, i.e., the type II multiple orthogonal polynomials $P_{\vec n}$ are 
$P_{\vec n, 1}$.

Using arguments as in \cite[Property (P)]{Kershaw}, one can obtain the interlacing property for the type II \MOP\ with respect to measures that form an AT system. Recall that a system of measures $(\mu _1,\ldots , \mu_r)$ forms an AT system on $[a,b]$ if the measures $\mu _j$ are absolutely continuous with respect to a measure $\mu$ on $[a,b]$, with $d\mu_j(x) = w_j(x)\, d\mu(x)$, and
\begin{eqnarray*}
\{ w_1, xw_1,\ldots , x^{n_1-1}w_1, w_2\ldots , x^{n_r-1}w_r \}
\end{eqnarray*}
is a Chebyshev system on $[a,b]$. The zeros of type II \MOP\ for a AT system are real and simple (e.g., \cite{Ismail}).
The results in \cite{Kershaw} and \cite{FidalgoPrieto} can then be summarized as
\begin{te}\label{thmAT}
Suppose that the measures $(\mu _1,\ldots , \mu_r)$ form an AT system on $[a,b]$. Then the zeros of type II \MOP\ $P_{\vec n}$ and 
$P_{\vec n + \vec e_k}$ interlace.
\end{te}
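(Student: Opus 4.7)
The plan is to mimic Kershaw's argument for Property (P), adapted to the AT setting, and to argue by contradiction. Two structural ingredients will be used throughout. First, the AT hypothesis says exactly that
\[
V_{\vec m} := \mathrm{span}\{x^k w_j(x) : 0\le k \le m_j-1,\ 1\le j\le r\}
\]
is a Chebyshev system of dimension $|\vec m|$ on $[a,b]$; consequently, any nonzero element of $V_{\vec m}$ has at most $|\vec m|-1$ zeros in $[a,b]$, and conversely, for any prescribed $|\vec m|-1$ points in $(a,b)$ there is (unique up to a nonzero scalar) an element of $V_{\vec m}$ whose sign changes on $[a,b]$ are exactly at those points. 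Second, the orthogonality conditions \eqref{orthogonality_typeII} translate to $\int_{\R} P_{\vec m}(x)\, f(x)\, d\mu(x) = 0$ for every $f\in V_{\vec m}$. Combining the two via the standard sign-change argument already gives that $P_{\vec n}$ and $P_{\vec n+\vec e_k}$ have $|\vec n|$ and $|\vec n|+1$ simple zeros $x_1<\cdots<x_{|\vec n|}$ and $y_1<\cdots<y_{|\vec n|+1}$ in $(a,b)$, respectively.

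For the interlacing itself, I assume for contradiction that some interval $(y_j,y_{j+1})$ contains no $x_i$; the boundary case where an $x_i$ lies outside $[y_1,y_{|\vec n|+1}]$ is analogous after replacing $y_j$ or $y_{j+1}$ by an endpoint. By the Chebyshev property of $V_{\vec n}$, pick a nonzero $R\in V_{\vec n}$ whose sign changes on $[a,b]$ are exactly the $|\vec n|-1$ points $\{y_i : i\neq j,\, j+1\}$. Then $R\cdot P_{\vec n+\vec e_k}$ has double zeros at these $|\vec n|-1$ shared sign-change points and simple zeros at $y_j$ and $y_{j+1}$, so after normalising the sign of $R$ we get $R\cdot P_{\vec n+\vec e_k}\geq 0$ on $[a,b]\setminus(y_j,y_{j+1})$ and $R\cdot P_{\vec n+\vec e_k}\leq 0$ on $(y_j,y_{j+1})$. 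Since $R\in V_{\vec n}\subset V_{\vec n+\vec e_k}$, orthogonality yields simultaneously
\[
\int_{\R} R(x)\, P_{\vec n+\vec e_k}(x)\, d\mu(x) = 0 \qquad \text{and} \qquad \int_{\R} R(x)\, P_{\vec n}(x)\, d\mu(x) = 0.
\]

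The main obstacle will be the final step: extracting a contradiction from these two simultaneous vanishings. Having no $x_i$ in $(y_j,y_{j+1})$ forces $R\cdot P_{\vec n}$ to have constant sign on $(y_j,y_{j+1})$, while the first orthogonality identity together with the sign pattern of $R\cdot P_{\vec n+\vec e_k}$ shows that $d\mu$ must put positive mass on both $(y_j,y_{j+1})$ and its complement. One then exploits this information together with the sign pattern of $R\cdot P_{\vec n}$ outside $(y_j,y_{j+1})$ (determined by the positions of the $x_i$'s relative to the $y_i$'s) to contradict the second orthogonality. This delicate sign bookkeeping is exactly the content of Kershaw's argument for Property (P), and is where the positivity of $d\mu$ and the Chebyshev structure interact most critically.
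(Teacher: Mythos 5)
Your setup is fine as far as it goes: the reduction of the AT hypothesis to a Chebyshev system $V_{\vec m}$, the fact that orthogonality means $\int P_{\vec m}f\,d\mu=0$ for all $f\in V_{\vec m}$, the simplicity of the zeros, the choice of $R\in V_{\vec n}\subset V_{\vec n+\vec e_k}$ with prescribed sign changes, and the two vanishing integrals are all correct. But the proof stops exactly where the real work begins, and the step you defer does not go through as described. The function $R\cdot P_{\vec n}$ changes sign at the zeros $x_1,\dots,x_{|\vec n|}$ of $P_{\vec n}$ \emph{and} at the chosen points $y_i$ ($i\neq j,j+1$); since the relative position of the $x_i$ and the $y_i$ is precisely what the theorem is trying to establish, the ``sign pattern of $R\cdot P_{\vec n}$ outside $(y_j,y_{j+1})$'' is not determined, and no contradiction with $\int R\,P_{\vec n}\,d\mu=0$ can be read off from signs alone. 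Already for $r=1$ and $|\vec n|=1$ your two identities reduce to $\int P_{(1)}\,d\mu_1=0$ and $\int P_{(2)}\,d\mu_1=0$, and the inequality one actually needs, $P_{(2)}(x_1)<0$, is equivalent to $m_0m_2-m_1^2>0$ for the moments of $\mu_1$, i.e.\ to Cauchy--Schwarz; it is not a consequence of sign considerations on those two integrands. Attributing the missing step to ``Kershaw's argument for Property (P)'' is also a misreading: Property (P) is the statement that $Q(x)W_{r+1}(x,x_1,\dots,x_r)$ keeps a constant sign when $x_1,\dots,x_r$ are the sign changes of $Q$, and it is used for a different purpose than the one you assign to it.

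The paper supplies the missing idea by applying Property (P) not to $P_{\vec n+\vec e_k}$ alone but to an arbitrary nontrivial combination $AP_{\vec n}+BP_{\vec n+\vec e_k}$: if such a combination had a double real zero $c$, writing it as $(x-c)^2Q(x)$ and integrating $(x-c)^2Q(x)W_{r+1}(x,x_1,\dots,x_r)$ against $d\mu$ gives a nonzero value by Property (P) and zero by the orthogonality shared by $P_{\vec n}$ and $P_{\vec n+\vec e_k}$. Hence every such combination has only simple zeros, so the determinant
\begin{equation*}
P_{\vec n}(x)P'_{\vec n+\vec e_k}(x)-P'_{\vec n}(x)P_{\vec n+\vec e_k}(x)
\end{equation*}
never vanishes, is everywhere positive, and evaluating it at consecutive zeros of $P_{\vec n}$ yields the interlacing. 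Note that your own by-contradiction hypothesis (no $x_i$ in $(y_j,y_{j+1})$) forces this determinant to have opposite signs at $y_j$ and $y_{j+1}$ and hence to vanish in between, so any repair of your argument is driven back to the same key lemma about double zeros of combinations; you should prove that lemma rather than the sign bookkeeping you sketch.
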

\begin{proof}
Let $A$ and $B$ be two constants such that $|A|+|B|\not= 0$. Consider $AP_{\vec n} +BP_{\vec n + \vec e_k}$. 
Suppose that this polynomial has a multiple real zero at $c \in \mathbb{R}$, i.e., at least a double zero
\begin{eqnarray*}
AP_{\vec n}(x) +BP_{\vec n + \vec e_k}(x) = (x-c)^2 Q(x),
\end{eqnarray*}
for some polynomial $Q$ of degree $|n|-1$. Denote
\begin{multline*}
W_n (x_1,\ldots , x_{|\vec n|}) \\ 
= \det \left[ \begin{matrix} w_1(x_1) & \ldots & x_1^{n_1-1}w_1(x_1) & w_2(x_1) & \ldots & x_1 ^{n_r-1}w_r(x_1) \\
w_1(x_2) & \ldots &  x_2^{n_1-1} w_1 (x_2) & w_2(x_2) &\ldots & x_2^{n_r-1}w_r(x_2) \\
\vdots & \cdots& \vdots &\vdots & \cdots &\vdots \\
w_1(x_{|\vec n|}) & \ldots & x_{|\vec n|}^{n_1-1}w_1(x_{|\vec n|}) & w_2(x_{|\vec{n}|}) & \ldots & x_{|\vec n|}^{n_r-1} w_r(x_{|\vec n|})\end{matrix}\right]
\end{multline*}
for points $x_1,\ldots ,x_{|\vec n|}$ in $[a,b]$. This determinant vanishes if and only if $x_i=x_j$ for some $i,j$. 
We consider $W_n(x, x_1, \ldots x_{|\vec n|-1})$ as a function of $x$, and it changes sign only if $x$ passes through the points $x_1,\ldots , x_{|\vec n|-1}$.
If $Q$ is a polynomial of degree $|\vec n|-1$ with real coefficients, having precisely $r$ real zeros $x_1,\ldots x_r$ on $[a,b]$ at which $Q$ changes sign then $Q(x) W_{r+1}(x, x_1, \ldots x_r)$
does not change sign on $[a,b]$ (this is a Property (P) in \cite{Kershaw}). In particular
\[  \int_a^b  (x-c)^2 Q(x) W_{r+1}(x,x_1,\ldots , x_r) \, d\mu (x) \neq 0. \]
Using the orthogonality conditions  
\begin{eqnarray*}
\int _a ^b x^k (x-c)^2 Q(x) w_j(x)\, d\mu (x) = 0,\  \quad k=0,\ldots n_j-1,\ \ j=1,\ldots , r,
\end{eqnarray*}
we get
\begin{eqnarray*}
\int _a ^b (x-c)^2 Q(x) W_{r+1}(x,x_1,\ldots , x_r) \, d\mu (x) = 0,
\end{eqnarray*}
which forces a contradiction. This proves that $AP_{\vec n} +BP_{\vec n + \vec e_k}$ has only simple zeros on the real line. 
Hence the linear system of equations
\begin{eqnarray*}
\left[ \begin{matrix} P_{\vec n}(x) & P_{\vec n+\vec e_k}(x) \\ P'_{\vec n}(x) & P'_{\vec n+\vec e_k}(x) \end{matrix} \right]  \left[ \begin{matrix} A \\ B \end{matrix} \right] = \left[ \begin{matrix} 0 \\ 0 \end{matrix} \right] 
\end{eqnarray*}
has only the trivial solution $A=B=0$ for every $x\in \mathbb{R}$. Therefore the matrix on the left hand side has non-zero determinant and by continuity and the behavior for large $x$ we conclude that for all $x\in \mathbb{R}$
\begin{eqnarray} \label{critical_inequality}
P_{\vec n}(x)P'_{\vec n+\vec e_k}(x)  - P'_{\vec n}(x) P_{\vec n+\vec e_k}(x) >0 .
\end{eqnarray}
If we now fix two consecutive zeros $x_k$, $x_{k+1}$ of $P_{\vec n}$, then $P'_{\vec n} (x_k)P'_{\vec n}(x_{k+1})<0$. From the inequality (\ref{critical_inequality}) we get
\begin{eqnarray*}
P'_{\vec n}(x_k) P_{\vec n + \vec e_k}(x_k)<0\ \ \text{and}\ \ P'_{\vec n} (x_{k+1}) P_{\vec n + \vec e_k} (x_{k+1})<0. 
\end{eqnarray*}
Hence 
\begin{eqnarray*}
P_{\vec n+\vec e_k} (x_k) P_{\vec n+\vec e_k} (x_{k+1}) <0,
\end{eqnarray*}
which means that there is at least one zero $y_{k}$ of $P_{\vec n+ \vec e_k}$ between $x_k$ and $x_{k+1}$. If $x_{|\vec n|}$ is the greatest zero of 
$P_{\vec n}$ then $P'_{\vec n}(x_{|\vec n|})>0$ and from inequality (\ref{critical_inequality}) we have $P_{\vec n+\vec e_k} (x_{|\vec n|})<0$. But since $P_{\vec n+\vec e_k}$ is a monic polynomial, the sign of $P_{\vec n+\vec e_k}(x)$ for large enough $x$ is positive, which means that there is at least one zero of $P_{\vec n+\vec e_k}$ on the right side of $x_{|\vec n|}$.
Similarly there is at least one zero $y_1$ of $P_{\vec n+ \vec e_k}$ on the left side of the least zero $x_1$ of $P_{\vec n}$. Since the zeros of both polynomials are real and simple we have
\begin{eqnarray*}
y_j<x_{j}<y_{j+1}\ \quad \text{for}\ j=1,\ldots ,|\vec n|. 
\end{eqnarray*} 
\end{proof}

There is an alternative definition of an AT system: instead of zeros one may consider sign changes. In this case instead of using Property (P) from 
\cite{Kershaw} in the crucial step, one uses the normality of the index $\vec n$, which holds because $(x-c)^2(\mu _1, \ldots , \mu _r )$ is an AT system. 
Then the polynomial $Q$ of degree $|\vec{n}|-1$ satisfies the orthogonality conditions of the multiple orthogonal polynomial with multi-index
$\vec{n}$ for the AT-system $(x-c)^2 (\mu_1,\ldots,\mu_r)$, which is not possible because of the normality.

In \cite{FidalgoPrieto2} the authors investigate a slightly more general notion of multiple orthogonal polynomials, i.e., the case of mixed type multiple orthogonal polynomials. They investigate them for two Nikishin systems. Let $N(\sigma_1^1,\ldots \sigma_{r_1}^1)$ and $N(\sigma_1^2,\ldots \sigma_{r_2}^2)$ be two Nikishin systems of $r_1$ and $r_2$ measures respectively, which come from the same basis measure $\sigma _0^1=\sigma _0^2$. 
The polynomials $a_{\textbf{n},0},\ldots, a_{\textbf{n},r_1}$, where $\textbf{n} = (\textbf{n}_1, \textbf{n}_2 )$, $\textbf{n} _i= ( n_{i,0}, n_{i,1}, \ldots, n_{i,r_i})\in \Z ^{r_i+1}$ for $i=1,2$, such that 
\begin{enumerate}
\item the degree of $a_{\textbf{n},j}\leq n_{1,j}-1$, $j=0, \ldots , r_1$,  not all identically equal to zero,
\item for $i=0,\ldots , r_2$ they satisfy
\begin{eqnarray*}
\int x^k \Bigg( a_{\textbf{n},0}(x) + \sum_{j=1}^{r_1} a_{\textbf{n},j} (x) \hat s_{1,j}^1 (x)\Bigg)\, d s_{0,k}^2(x) =0,\quad k=0, \ldots, n_{2,i}-1, 
\end{eqnarray*}
where $s^1 _{j,k}$ are measures (and $\hat s^1 _{j,k}$ its Cauchy transform) defined by induction using (\ref{sigma_induction}), i.e., 
$s_{0,0}=\< \sigma_0 \> = \sigma _0$ and $s_{j,k}=\< \sigma _j,\ldots ,\sigma _k \>$,
\end{enumerate}
are called mixed type multiple orthogonal polynomials. They show (\cite[Theorem~3.5]{FidalgoPrieto2}) that the zeros of 
\begin{eqnarray*}
\mathcal{A}_{\textbf{n} , j }=a_{\textbf{n},0} + \sum_{i=j}^{r_1} a_{\textbf{n},i}(x) \hat s_{j+1,i}^1(x)\ \ \text{and}\ \ \mathcal{A}_{\textbf{n}^\ell,j} 
\end{eqnarray*}
interlace, where $\textbf{n}^\ell=(\textbf{n}_1+\vec e_{\ell_1},\textbf{n}_2+\vec e_{\ell_2})$, $0\leq \ell_1\leq r_1$ and $0\leq \ell_2\leq r_2$. 
Note that some special cases of this theorem are
\begin{enumerate}
\item if $r_1=0$ then the zeros of $a_{\textbf{n},0}$ and $a_{\textbf{n}^\ell,0}$ interlace, i.e.,  
   in our notation, that the zeros of type II \MOP\ $P_{\vec n}$ and $P_{\vec n+\vec e_k}$ interlace,
\item if $r_2=0$ and $j=0$ then the zeros of
\begin{eqnarray*}
\sum_{i=0}^{r_1} a_{\textbf{n},i}(x) \hat s_{1,i}^1(x)\ \ \text{and}\ \ \sum_{i=0}^{r_1} a_{\textbf{n}^\ell,i}(x) \hat s_{1,i}^1(x)
\end{eqnarray*}
 interlace, i.e., in our notation, that the zeros of the type I \MOP\ $Q_{\vec n}$ and $Q_{\vec n + \vec e_k}$ interlace. 
\end{enumerate}

\subsection{Interlacing properties from the recurrence relation} \label{subsection_iterlacing_properties_from_rr}

We are interested in finding the interlacing property from the nearest neighbor recurrence relations. 
Note that  $a_{\vec n,j}\not = 0$ in (\ref{nnrr}) whenever all multi-indices are normal and $n_j>0$. Indeed these 
coefficients are defined as a ratio of two integrals (\ref{anj}). Both integrals are non-zero due to normality of $\vec n$ and $\vec n+\vec e_k$. If 
\begin{eqnarray*}
\int _{\R} x^{n_j} P_{\vec n } (x)\, d\mu _j(x) = 0 
\end{eqnarray*}
then $P_{\vec n}$ would satisfy the same orthogonality conditions as $P_{\vec n + \vec e_j}$, which shows that there exist a polynomial of degree $\leq |\vec n|$ satisfying the orthogonality conditions for a multi-index of length $|\vec n|+1$ which contradicts the normality of $\vec n+\vec e_k$.

We assume that $P_{\vec n}=0$  if $n_j<0$ for at least one $j$ and for $|\vec n|=0$ we assume that $P_{\vec n}=1$. Our main result in this paper is 

\begin{te} \label{thm} Suppose that the zeros of $P_{\vec n}$ are real and simple, and all multi-indices are normal. If for all $1\leq j\leq r$ 
and for all multi-indices $\vec{n}$ one has $a_{\vec n,j}>0$ whenever $n_j >0$, then the zeros of $P_{\vec n}$ and $P_{\vec n +\vec e_k}$ interlace for every $k$ with $1 \leq k \leq r$.
\end{te}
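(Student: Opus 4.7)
The plan is to exploit the nearest neighbor recurrence (\ref{nnrr}) by evaluating it at the zeros of $P_{\vec n}$. At a zero $x_i$ of $P_{\vec n}$ the middle term disappears and the recurrence reduces to
\begin{equation*}
P_{\vec n+\vec e_k}(x_i) \;=\; -\sum_{j:\,n_j>0} a_{\vec n,j}\, P_{\vec n-\vec e_j}(x_i),
\end{equation*}
where the restriction to $n_j>0$ is automatic because $P_{\vec n-\vec e_j}\equiv 0$ whenever $n_j=0$ by the stated convention. The idea is then to read off the sign of $P_{\vec n+\vec e_k}(x_i)$ from the positivity of the $a_{\vec n,j}$ together with the signs of the polynomials $P_{\vec n-\vec e_j}$ at $x_i$, and finally to deduce interlacing by counting sign changes.

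I would run the argument by induction on $|\vec n|$, proving simultaneously that the zeros of $P_{\vec m}$ are real and simple and that they interlace with those of $P_{\vec m+\vec e_k}$ for every direction $k$. The base $|\vec n|=0$ is trivial. For the inductive step, applying the hypothesis to each multi-index $\vec n-\vec e_j$ (with $n_j>0$) in the direction $\vec e_j$ gives interlacing of the zeros of $P_{\vec n-\vec e_j}$ with those of $P_{\vec n}=P_{(\vec n-\vec e_j)+\vec e_j}$. Writing $x_1<\cdots<x_{|\vec n|}$ for the zeros of $P_{\vec n}$ and $y_1^{(j)}<\cdots<y_{|\vec n|-1}^{(j)}$ for the zeros of $P_{\vec n-\vec e_j}$, interlacing places exactly one $y_\ell^{(j)}$ in each interval $(x_\ell,x_{\ell+1})$. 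A direct count of the signs of the factors in $\prod_\ell (x_i-y_\ell^{(j)})$ then yields
\begin{equation*}
\operatorname{sgn} P_{\vec n-\vec e_j}(x_i) \;=\; (-1)^{|\vec n|-i},
\end{equation*}
and the crucial observation is that this sign is the same for every admissible $j$. Combined with $a_{\vec n,j}>0$, the displayed recurrence gives $\operatorname{sgn} P_{\vec n+\vec e_k}(x_i)=(-1)^{|\vec n|-i+1}$, so $P_{\vec n+\vec e_k}$ alternates in sign on $x_1,\ldots,x_{|\vec n|}$.

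To finish, the intermediate value theorem furnishes at least one zero of $P_{\vec n+\vec e_k}$ in each of the $|\vec n|-1$ intervals $(x_i,x_{i+1})$. Two further zeros are produced at the ends by matching the sign at $x_1$ (resp.\ $x_{|\vec n|}$) against the leading behavior of the monic polynomial $P_{\vec n+\vec e_k}$ of degree $|\vec n|+1$ as $x\to-\infty$ (resp.\ $x\to+\infty$); in both cases the signs disagree, producing a zero outside $[x_1,x_{|\vec n|}]$. This accounts for $|\vec n|+1$ distinct real zeros of $P_{\vec n+\vec e_k}$, hence for all of them, so $P_{\vec n+\vec e_k}$ has real simple zeros that interlace with those of $P_{\vec n}$, closing the induction. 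The main obstacle — and the point where positivity of the $a_{\vec n,j}$ is used — is establishing that $\operatorname{sgn} P_{\vec n-\vec e_j}(x_i)$ does not depend on $j$: without the inductive interlacing of each $P_{\vec n-\vec e_j}$ with $P_{\vec n}$ the right-hand sum in the reduced recurrence could cancel, and no sign information about $P_{\vec n+\vec e_k}(x_i)$ could be extracted.
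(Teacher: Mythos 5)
Your proposal is correct and follows essentially the same route as the paper: induction on $|\vec n|$, evaluating the nearest neighbor recurrence at the zeros of $P_{\vec n}$, using the inductive interlacing of $P_{\vec n-\vec e_j}$ with $P_{\vec n}$ to see that all terms in the sum have the same sign $(-1)^{|\vec n|-i}$, and then combining the resulting sign alternation with the monic leading behavior to locate all $|\vec n|+1$ zeros of $P_{\vec n+\vec e_k}$. The only cosmetic difference is that you fold the realness and simplicity of the zeros into the induction rather than taking it as a standing hypothesis, which the paper's own counting argument implicitly does as well.
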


\begin{proof}
We will use a proof by induction on the length of $\vec n$. For $|\vec n|=1$ we have one zero $x_1$ of $P_{\vec n}$. Evaluating the nearest neighbor recurrence relation \eqref{nnrr} for type II \MOP\ at $x_1$, we get
\begin{equation*}
P_{\vec n + \vec e_k} (x_1)= - \sum_{j=1}^r a_{\vec n, j} P_{\vec n -\vec e_j} (x_1).
\end{equation*}
From our assumption we know that $P_{\vec n -\vec e_j} =1$ only for one $j=j_0$ and for the other $j$ it is zero. It follows that the sign of 
$P_{\vec n + \vec e_k}$ at $x_1$ is minus the sign of $a_{\vec n ,j_0}$, hence it is negative.
On the other hand, since $ P_{\vec n + \vec e_k}$ is monic and of degree $2$, the sign of $P_{\vec n + \vec e_k}$ for large enough and small enough $x$ is necessarily positive. This means that the zero of $P_{\vec n}$ is between two zeros of $P_{\vec n +\vec e_k}$.

Suppose that for $|\vec n |=m-1$ the zeros of $P_{\vec n }$ and $P_{\vec n +\vec e_k}$ interlace for all $k=1,\ldots ,r$. 
This means that $P_{\vec n + \vec e_k}$ at the zeros $x_i$ of $P_{\vec n}$ has alternating signs, i.e.,
\begin{equation*}
\sgn  P_{\vec n + \vec e_k} (x_i)  = \begin{cases} (-1)^{i+1}\ & \textrm{if } |\vec n|\ \text{is even} \\
                                                   (-1)^{i}\ & \textrm{if } |\vec n|\ \text{is odd} 
                                     \end{cases} 
\end{equation*}
Consider the case $|\vec n|=m$. Let $x_1,\ldots x_m$ be the zeros of $P_{\vec n}$. Evaluating the nearest neighbor recurrence relation 
\eqref{nnrr} at $x_i$ ($1\leq i\leq m$) we get
\begin{equation*}
P_{\vec n + \vec e_k} (x_i)= - \sum_{j=1}^r a_{\vec n, j} P_{\vec n -\vec e_j} (x_i).
\end{equation*} 
Denote by $y_1,\ldots y_{m+1}$ the zeros of $P_{\vec n + \vec e_k}$. From the induction assumption we know that the zeros of $P_{\vec n -\vec e_j}$ 
and $P_{\vec n}$ interlace, i.e., 
\begin{equation*}
\sgn P_{\vec n - \vec e_j} (x_i) = \begin{cases} (-1)^i\ & \textrm{if } |\vec n|\ \text{is even} \\
                                                 (-1)^{i+1}\ & \textrm{if } |\vec n|\ \text{is odd} \end{cases} .
\end{equation*}
Therefore, since $a_{\vec n, j}\geq 0$ and at least for one $j$ with $1 \leq j \leq r$ we have $a_{\vec{n},j} >0$ 
\begin{equation*}
\sgn P_{\vec n + \vec e_k} (x_i) = \begin{cases}(-1)^{i+1}\ & \textrm{if } |\vec n|\ \text{is even} \\
                                                (-1)^{i+2}\ & \textrm{if } |\vec n|\ \text{is odd} \end{cases} .
\end{equation*}
Both polynomials $P_{\vec n }$ and $P_{\vec n + \vec e_k}$ are monic and therefore for large enough $x$ their sign is positive. 
But at the point $x_m$ (the largest zero of $P_{\vec n}$), the polynomial $P_{\vec n + \vec e_k}$ has negative sign. Therefore there is at least one zero $y_{m+1}$ of $P_{\vec n + \vec e_k}$ such that $x_m<y_{m+1}$.
Similarly there is at least one zero of $P_{\vec n + \vec e_k}$ on the left hand side of the smallest zero of $P_{\vec n}$, i.e., $y_1<x_1$. 
Consequently, since the zeros of $P_{\vec n +\vec e_k}$ are real and simple, we have exactly one zero of $P_{\vec n}$ between two consecutive zeros of 
$P_{\vec n +\vec e_k}$ 
\begin{equation*}
y_j<x_{j}<y_{j+1}\ \quad \text{for}\ j=1,\ldots ,m. 
\end{equation*}
\end{proof}

\section{Examples}\label{section_examples}

Theorem \ref{thm} proves interlacing of zeros of many families of multiple orthogonal polynomials, such as multiple Hermite, Charlier, Meixner of the first kind, Krawtchouk, and Laguerre polynomials of the second kind. However, there are also examples where the positivity condition 
($a_{\vec n,j} > 0$ for all $\vec{n}$ with $n_j >0$) is too strong, showing that the condition is sufficient but not necessary. For instance in the case of multiple Laguerre polynomials of the first kind, the coefficients $a_{\vec n,j}$ are not all positive. A similar situation occurs in the case of multiple Meixner polynomials of the second kind, since in some sense Meixner polynomials are the discrete analogs of the Laguerre polynomials.  
Nevertheless both examples have the interlacing property, which follows from the fact that the measures, with respect to which these polynomials satisfy orthogonality conditions, form an AT system.

\subsection{Multiple Hermite polynomials} \label{subsection_hermite}

Multiple Hermite polynomials (see \cite{Ismail}) are given by the Rodrigues formula
\begin{eqnarray*}
H_{\vec n} ^{\vec c}(x) = (-1)^{|\vec n|} 2 ^{-|\vec n|} e^{x^2} \prod_{j=1}^r \big( e^{-c_jx} \frac{d^{n_j}}{dx^{n_j}} e^{c_jx} \big) e^{-x^2} ,
\end{eqnarray*}
where $c_1,\ldots , c_r$ are distinct real numbers. These polynomials are orthogonal with respect to measures $(\mu_1,\ldots ,\mu _r)$ which are given by $d\mu _j(x) = e^{-x^2+c_jx}\,dx$ on $(-\infty , \infty)$. These measures form an AT system. 
An explicit expression for multiple Hermite polynomials is
\begin{eqnarray*}
H_{\vec n} ^{\vec c}(x) &=& \frac{ (-1)^{|\vec n|} }{2 ^{|\vec n|}} \sum _{k_1=0}^{n_1}\ldots \sum _{k_r=0}^{n_r}{n_1\choose k_1} \ldots {n_r\choose k_r} \prod_{j=1}^r c_j^{n_j-k_j} (-1)^{|\vec k|} H_{|\vec k|} (x) ,
\end{eqnarray*}
where $H_{|\vec k|}$ is the usual Hermite polynomial.
From both formulas for $H_{\vec n} ^{\vec c}$ we get the coefficients in the nearest neighbor recurrence relation, and the recurrence relation is
\begin{eqnarray*}
x H_{\vec n} ^{\vec c}(x) = H_{\vec n + \vec e_k} ^{\vec c}(x) + \frac{c_k}{2} H_{\vec n} ^{\vec c}(x) + \sum_{j=1}^r \frac{n_j}{2} H_{\vec n-\vec e_j} ^{\vec c}(x) ,
\end{eqnarray*} 
so that $b_{\vec{n},k} = c_k/2$ and $a_{\vec{n},j} = n_j/2$. 
Clearly $a_{\vec n, j}>0$ whenever $n_j > 0$ and therefore Theorem \ref{thm} gives the interlacing property for the zeros of multiple Hermite polynomials.

\subsection{Multiple Charlier polynomials} \label{subsection_charlier}

Multiple Charlier polynomials (see \cite{Ismail}) are given by the Rodrigues formula
\begin{eqnarray*}
C_{\vec n} ^{\vec a} (x) = \prod _{j=1}^r (-a_j)^{n_j} \Gamma (x+1) \prod _{j=1}^r \big( a_j^{-x} \nabla  ^{n_j} a_j^x \big) \frac{1}{\Gamma (x+1)} ,
\end{eqnarray*}
where $a_1,\ldots , a_r>0$ and $a_i\not= a_j$ whenever $i\not=j$, and $\nabla$ is the backward difference operator, i.e.,
\begin{equation*}
\nabla f(x) = f(x)-f(x-1) .
\end{equation*}
These polynomials are orthogonal with respect to discrete measures $(\mu_1,\ldots , \mu_r)$ which are given by Poisson measures 
\begin{eqnarray*}
\mu_j=\sum_{k=0}^{\infty} \frac{a_j^k}{k!} \delta _k. 
\end{eqnarray*}
These measures form an AT system.
An explicit expression for multiple Charlier polynomials is
\begin{eqnarray*}
C_{\vec n} ^{\vec a} (x) &=& \sum_{k_1=0}^{n_1}\ldots \sum_{k_r=0}^{n_r} {n_1\choose k_1}\ldots {n_r\choose k_r} \prod _{j=1}^r (- a_j)^{n_j-k_j} (-1)^{|\vec k|}(-x)_{|\vec k|} . 
\end{eqnarray*}
From both formulas for $C_{\vec n} ^{\vec a}$ we get the coefficients in the nearest neighbor recurrence relation, and the recurrence relation is
\begin{eqnarray*}
x C_{\vec n} ^{\vec a}(x) = C_{\vec n + \vec e_k} ^{\vec a}(x) + (a_k + |\vec n|) C_{\vec n} ^{\vec a}(x) + \sum_{j=1}^r a_jn_j  C_{\vec n-\vec e_j} ^{\vec a}(x) ,
\end{eqnarray*}
so that $b_{\vec{n},k} = a_k + |\vec{n}|$ and $a_{\vec{n},j} = a_j n_j$. 
Since $a_j>0$ we have $a_{\vec n,j} >0$ whenever $n_j >0$ and therefore by Theorem \ref{thm} we get the interlacing property for the zeros of multiple Charlier polynomials.

\subsection{Multiple Meixner polynomials of the first kind} \label{subsection_meixner_fk}

Multiple Meixner polynomials of the first kind (see \cite{Ismail}) are given by the Rodrigues formula
\begin{eqnarray*}
M_{\vec n} ^{\beta , \vec c} (x) &=& (\beta )_{|\vec n|} \prod _{j=1}^r \Big( \frac{c_j}{c_j-1} \Big) ^{n_j} \frac{\Gamma (\beta ) \Gamma (x+1)}{ \Gamma (\beta +x)} \prod _{j=1}^r \Big( c_j^{-x} \nabla ^{n_j} c_j^x \Big) \\
& & \times\  \frac{\Gamma (|\vec n|+\beta +x)}{\Gamma (|\vec n|+\beta )\Gamma (x+1)} ,
\end{eqnarray*}
where $\beta >0$, $0 < c_i < 1$ for all $i$ and $c_i\not= c_j$ whenever $i\not= j$. These polynomials are orthogonal with respect to discrete measures $(\mu_1,\ldots ,\mu _r)$ which are given by 
\begin{eqnarray*}
\mu _j = \sum _{k=0}^{\infty} \frac{(\beta )_k c_j^k}{k!}\delta _k .
\end{eqnarray*}
These measures form an AT system. 
We can compute an explicit expression for multiple Meixner polynomials of the first kind 
\begin{eqnarray*}
M_{\vec n} ^{\beta , \vec c} (x) = \sum_{k_1=0}^{n_1}\ldots \sum_{k_r=0}^{n_r} {n_1\choose k_1}\ldots {n_r\choose k_r}  \prod _{j=1}^r \frac{c_j^{n_j-k_j}}{(c_j-1) ^{n_j}}  (-x)_{|\vec k|} (\beta +x)_{|\vec n| -|\vec k|} .
\end{eqnarray*}
From both formulas for $M_{\vec n} ^{\beta , \vec c}$ we can compute the coefficients in the nearest neighbor recurrence relation, and the recurrence relation is
\begin{eqnarray*}
x M_{\vec n} ^{\beta , \vec c}(x) &=& M_{\vec n + \vec e_k} ^{\beta , \vec c}(x) + \Bigg( (|\vec n|+\beta ) \frac{c_k}{1-c_k} + \sum_{i=1}^r \frac{n_i}{1-c_i}\Bigg)  M_{\vec n} ^{\beta , \vec c}(x) \\
&& +\ \sum_{j=1}^r  c_jn_j \frac{(\beta + |\vec n|-1) } { (1-c_j)^2 }  M_{\vec n-\vec e_j} ^{\beta , \vec c}(x) ,
\end{eqnarray*}
so that
\[   b_{\vec{n},k} = (|\vec n|+\beta ) \frac{c_k}{1-c_k} + \sum_{i=1}^r \frac{n_i}{1-c_i}, \quad
     a_{\vec{n},j} = c_jn_j \frac{(\beta + |\vec n|-1) } { (1-c_j)^2 }.  \]
These recurrence coefficients were not computed earlier and appear here for the first time.
Since $0 < c_j < 1$ and $\beta >0$ we see that $a_{\vec n, j}>0$ whenever $n_j>0$ and therefore by Theorem \ref{thm} we get the interlacing property of the zeros of multiple Meixner polynomials of the first kind.

\subsection{Multiple Krawtchouk polynomials} \label{subsection_krawtchouk}

Multiple Krawtchouk polynomials (see \cite{Ismail}) are type II multiple orthogonal polynomials that are orthogonal with respect to binomial measures
\begin{eqnarray*}
\mu _j = \sum _{k=0}^{ N} {N\choose k} p_i^k (1-p_i)^{N-k} \delta _k .
\end{eqnarray*}
For $|\vec n| \leq N$ they are multiple Meixner polynomials of the first kind with $\beta = -N$ and $c_i=\frac{p_i}{p_i-1}$, 
for $0 < p_i <1$ for all $i$. 
Hence in this case we can immediately write the nearest neighbor recurrence relation using \ref{subsection_meixner_fk}, i.e.,
\begin{eqnarray*}
x K_{\vec n} ^{\vec p, N} (x)  &=& K_{\vec n +\vec e_k} ^{\vec p, N} (x)  + \Bigg( (N-|\vec n|) p_k +\sum_{i=1}^r n_i(1-p_i)\Bigg)  K_{\vec n} ^{\vec p, N} (x) \\
&& +\ \sum_{j=1}^r  \frac{p_j}{p_j-1}n_j \frac{(|\vec n|-N-1) } { (\frac{-1}{p_j-1})^{2} }  K_{\vec n -\vec e_j} ^{\vec p, N} (x)  ,
\end{eqnarray*}
so that
\[    b_{\vec{n},k} = (N-|\vec n|) p_k +\sum_{i=1}^r n_i(1-p_i), \quad
      a_{\vec{n},j} = \frac{p_j}{p_j-1}n_j \frac{(|\vec n|-N-1) } { (\frac{-1}{p_j-1})^{2} }. \]
Since $0 < p_j < 1$ and $N>0$ we see that $a_{\vec n, j}>0$ whenever $n_j > 0$ and therefore Theorem \ref{thm} gives the interlacing property of the zeros of the Multiple Krawtchouk polynomials for which $|\vec{n}|\leq N$.

\subsection{Multiple Laguerre polynomials of the second kind} \label{subsection_laguerre_sk}

Multiple Laguerre polynomials of the second kind (see \cite{Ismail}) are given by the Rodrigues formula
\begin{eqnarray*}
L_{\vec n} ^{\vec c}(x) = (-1)^{|\vec n|} \prod _{i=1}^r c_i^{-n_i}x^{-\alpha} \prod_{j=1}^r \big( e^{c_jx} \frac{d^{n_j}}{dx^{n_j}} e^{-c_jx} \big) x^{|\vec n |+\alpha } ,
\end{eqnarray*}
where $\alpha >-1$, $c_j>0$ and $c_i\not=c_j$ whenever $i\not=j$. These polynomials are orthogonal with respect to measures $(\mu_1,\ldots ,\mu _r)$ which are given by $d\mu _j(x) = x^{\alpha}e^{-c_jx}\,dx$ on $[0, \infty)$. These measures form an AT system.
One can compute an explicit expression for the multiple Laguerre polynomials of the second kind using Leibniz' rule:
\begin{eqnarray*}
L_{\vec n} ^{\vec c}(x) &=& \sum _{k_1=0}^{n_1}\ldots \sum _{k_r=0}^{n_r}{n_1\choose k_1} \ldots {n_r\choose k_r} {{|\vec n|+\alpha}\choose {|\vec k|} }|\vec k|!  \frac{ (-1)^{|\vec k|}} {\prod _{j=1}^r c_j^{k_j}} x^{|\vec n| -|\vec k|} .
\end{eqnarray*}
From both formulas for $L_{\vec n} ^{\vec c}$ we get the coefficients in the nearest neighbor recurrence relation, and the recurrence relation is
\begin{eqnarray*}
x L_{\vec n} ^{\vec c}(x) &=& L_{\vec n + \vec e_k} ^{\vec c}(x) + \left( \frac{(|\vec n|+1+\alpha ) }{c_k} + \sum _{j=1}^r \frac{n_j}{c_j} \right) 
L_{\vec n} ^{\vec c}(x) \\
&& +\ \sum_{j=1}^r \frac{n_j}{c_j^2 } (|\vec n|+\alpha)\ L_{\vec n-\vec e_j} ^{\vec c}(x) ,
\end{eqnarray*}
so that
\[   b_{\vec{n},k} = \frac{(|\vec n|+1+\alpha ) }{c_k} + \sum _{j=1}^r \frac{n_j}{c_j} , \quad
     a_{\vec{n},j} = \frac{n_j}{c_j^2 } (|\vec n|+\alpha). \]
Since $\alpha >-1$ we see that $a_{\vec n, j}>0$ whenever $n_j > 0$ and from Theorem \ref{thm} we then get the interlacing property for the zeros of multiple Laguerre of the second kind.

\subsection{Multiple Laguerre polynomials of the first kind} \label{subsection_laguerre_fk}

Multiple Laguerre polynomials of the first kind (see \cite{Ismail}) are given by the Rodrigues formula
\begin{eqnarray*}
L_{\vec n} ^{\vec \alpha}(x) = (-1)^{|\vec n|}e^x \prod_{j=1}^r \big( x^{-\alpha _j} \frac{d^{n_j}}{dx^{n_j}} x^{n_j+\alpha _j}\big) e^{-x},
\end{eqnarray*}
where $\alpha _i - \alpha _j\notin \Z$ whenever $i\not= j$, $\alpha _j >-1$. These polynomials are orthogonal with respect to measures $(\mu _1, \ldots , \mu _r)$ which are given by $d\mu _j (x) = x^{\alpha _j} e^{-x}\,dx$ on $[0,\infty )$. These measures form an AT system.
Again one can compute an explicit expression for multiple Laguerre polynomials of the first kind using Leibniz' rule:
\begin{eqnarray*}
L_{\vec n} ^{\vec \alpha}(x) &=& \sum_{k_1=0}^{n_1}\ldots \sum_{k_r=0}^{n_r} {n_1\choose k_1} \ldots {n_r\choose k_r} {{n_r+\alpha _r}\choose {k_r}}\ldots \\
&& \times\ {{|\vec n|- |\vec k| +k_1 +\alpha _1}\choose {k_1} }\prod _{i=1} ^r k_i! (-1)^{|\vec k|} x^{|\vec n|- |\vec k|} .
\end{eqnarray*}
From both formulas for $L_{\vec n} ^{\vec \alpha}$ we get the coefficients in the nearest neighbor recurrence relation, and the recurrence relation is
\begin{eqnarray*}
x L_{\vec n} ^{\vec \alpha} (x) &=& L_{\vec n + \vec e_k} ^{\vec \alpha} (x) + (|\vec n| + 1 + n_k + \alpha _k) L_{\vec n} ^{\vec \alpha} (x) \\
&& +\ \sum_{j=1}^r  n_j(n_j+\alpha _j) \prod _{i\not= j} ^r \frac{  \alpha _i -n_j - \alpha _j }{ n_i+\alpha _i -n_j-\alpha _j} \ L_{\vec n - \vec e_j} ^{\vec \alpha} (x) ,
\end{eqnarray*}
so that
\[   b_{\vec{n},k} = |\vec n| + 1 + n_k + \alpha _k ,  \quad
     a_{\vec{n},j} = n_j(n_j+\alpha _j) \prod _{i\not= j} ^r \frac{  \alpha _i -n_j - \alpha _j }{ n_i+\alpha _i -n_j-\alpha _j}. \]
Observe that $a_{\vec n, j}\not> 0$ and therefore we cannot apply Theorem \ref{thm}, but these polynomials have the interlacing property since the measures  $(\mu _1, \ldots , \mu _r)$ form an AT system so that Theorem \ref{thmAT} can be applied. It is worth noting that in \cite{VanAssche} it was shown that $\sum _{j=1}^r a_{\vec n,j} >0$. It is not clear whether such a condition is sufficient to prove the interlacing property.

\subsection{Multiple Mexiner polynomials of the second kind}\label{subsection_meixner_sk}

Multiple Mexiner polynomials of the second kind (see \cite{Ismail}) are given by the Rodrigues formula
\begin{eqnarray*}
M_{\vec n} ^{\vec \beta , c} (x) &=& \Big( \frac{c}{c-1} \Big) ^{|\vec n|} \prod_{j=1}^r (\beta _j)_{n_j} \frac{\Gamma (x+1)}{ c^x } \prod _{j=1}^r \Big( \frac{\Gamma (\beta _j)}{\Gamma (\beta _j +x)} \\
&&  \times\ \nabla ^{n_j} \frac{\Gamma (\beta _j + n_j + x)}{\Gamma (\beta _j + n_j )} \Big) \frac{c^x}{\Gamma (x+1)} ,
\end{eqnarray*}
where $0 < c < 1$ and $\beta_i -\beta_j\notin \Z$ whenever $i\not= j$, $\beta _j >0$. These polynomials are orthogonal with respect to discrete measures 
$(\mu_1,\ldots ,\mu _r)$ which are given by 
\begin{eqnarray*}
\mu _j = \sum _{k=0}^{\infty} \frac{(\beta _j)_k c^k}{k!}\delta _k .
\end{eqnarray*}
These measures form an AT system.
An explicit expression for the multiple Meixner polynomials of the first kind is 
\begin{eqnarray*}
M_{\vec n} ^{\vec \beta , c} (x) &=& \sum_{k_1=0}^{n_1}\ldots \sum_{k_r=0}^{n_r} {n_1\choose k_1}\ldots {n_r\choose n_r} \frac{c^{|\vec n|-|\vec k|}}{(c-1)^{|\vec n|}} \\
& & \times\ \prod _{j=1}^r (\beta _j+x -\sum_{i=1}^{j-1}k_i)_{n_j-k_j} (-x)_{|\vec k|} .
\end{eqnarray*}
From both formulas for $M_{\vec n} ^{\vec \beta, c}$ we get the coefficients in the nearest neighbor recurrence relation, and the recurrence relation is
\begin{eqnarray*}
x M_{\vec n} ^{\vec \beta , c}(x) &=& M_{\vec n + \vec e_k} ^{\vec \beta , c}(x) + \Big( \frac{|\vec n|}{1-c} + ( n_k + \beta _k ) \frac{c}{1-c}  \Big) 
 M_{\vec n} ^{\vec \beta , c}(x) \\
&& +\ \sum_{j=1}^r cn_j \frac{\beta _j +n_j-1}{(1-c)^2} \prod _{i\not= j} ^r \frac{  \beta _i -n_j - \beta _j }{ n_i+\beta _i -n_j-\beta _j} \ 
  M_{\vec n-\vec e_j} ^{\vec \beta , c}(x) ,
\end{eqnarray*}
so that
\[   b_{\vec{n},k} = \frac{|\vec n|}{1-c} + ( n_k + \beta _k ) \frac{c}{1-c}, \]
\[   a_{\vec{n},j} = cn_j \frac{\beta _j +n_j-1}{(1-c)^2} \prod _{i\not= j} ^r \frac{  \beta _i -n_j - \beta _j }{ n_i+\beta _i -n_j-\beta _j}  . \]
These recurrence coefficients were not computed earlier and appear here for the first time.
Again we see that $a_{\vec n,j}\not>0$ for all $j$. However the zeros of multiple Meixner of the second kind do interlace because of Theorem \ref{thmAT}. For these \MOP\ one can also show that $\sum_{j=1}^r a_{\vec n,j}>0$.

\begin{verbatim}
Department of Mathematics
Katholieke Universiteit Leuven
Celestijnenlaan 200B box 2400
BE-3001 Leuven 
Belgium
maciej.haneczok@wis.kuleuven.be
walter@wis.kuleuven.be
\end{verbatim}

\end{document}